\newtheorem{theorem}{Theorem}[section]
\newtheorem*{theorem*}{Theorem}
\newtheorem{lemma}[theorem]{Lemma}
\newtheorem{proposition}[theorem]{Proposition}
\newtheorem{corollary}[theorem]{Corollary}
\theoremstyle{definition}
\theoremstyle{remark}
\newtheorem{remark}[theorem]{Remark}
\numberwithin{equation}{section}
\title[Self-intersections of Laurent polynomials]{Self-intersections of Laurent polynomials and the density of Jordan curves}
\author{Sergei Kalmykov}
\address{School of mathematical sciences, Shanghai Jiao Tong University, 800 Dongchuan RD, Shanghai 200240, China;} \address{ 
Institute of applied mathematics FEB RAS, Vladivostok, 7 Radio str.,  Russia.
} 
\email{kalmykovsergei@sjtu.edu.cn}
\thanks{First author supported by  SJTU start-up grant program WF220407115 and partially by Russian Foundation for Basic Research (grant 18-31-00101)}
\author{Leonid V. Kovalev}
\address{215 Carnegie, Mathematics Department, Syracuse University, Syracuse, NY 13244, USA}
\email{lvkovale@syr.edu}
\thanks{Second author supported by the National Science Foundation grants DMS-1362453 and DMS-1764266.}
\subjclass[2010]{Primary 30B60; Secondary 12D10, 42A05} 
\keywords{Jordan curves, Laurent polynomials, trigonometric polynomials, self-intersections, Bezout theorem, resultant, intersection multiplicity}
\begin{document}

\begin{abstract} We extend Quine's bound on the number of self-intersection of curves with polynomial parameterization to the case of Laurent polynomials. As an application, we show that circle embeddings are dense among all maps from a circle to a plane with respect to an integral norm.
\end{abstract}

\maketitle
\baselineskip6mm

\section{Introduction}

In 1973 Quine~\cite{Quine73} proved that, with few exceptions, the restriction of a complex polynomial of degree $n$ to the unit circle $\mathbb T$ is a closed curve with at most $(n-1)^2$ self-intersections, and this upper bound is best possible. The exceptional case is the polynomial being of the form $p(z)=q(z^j)$ where $q$ is a polynomial and $j>1$. 

In the context of continuous circle maps $f\colon \mathbb T\to \mathbb C$ it is natural to consider Laurent polynomials $p(z)= \sum_{k=m}^n c_k z^k$, which can approximate $f$ uniformly. Our main result (Theorem~\ref{self-intersection-thm}) asserts, in part, that the closed curve $p_{|\mathbb T}$ has at most $(n-1)(n-m)$ self-intersections when $-n < m < 0$, unless $p$ is of the form $q(z^j)$ where $q$ is a Laurent polynomial and $j\ne -1, 1$. This estimate is sharp when $\gcd(n, m)=1$, as is shown in Section~\ref{lower-bounds-sec}. It also matches Quine's bound $(n-1)^2$ which corresponds to $m=1$. 

As a consequence of the finiteness of self-intersections, we obtain the density of circle embeddings in $L^p$ norms for finite $p$.   

\begin{theorem*}[Theorem~\ref{approximation-thm}]
For $p\in [1, \infty)$, every function $f\in L^p(\mathbb T; \mathbb C)$ can be approximated in the $L^p$ norm by orientation-preserving 
$C^\infty$-smooth embeddings of $\mathbb T$ into $\mathbb C$. 
\end{theorem*}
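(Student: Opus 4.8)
The plan is to reduce the statement to a surgery problem and then exploit the fact that for $p<\infty$ the $L^p$ norm permits large modifications of a map on sets of small measure. First I would use that Laurent (trigonometric) polynomials are dense in $L^p(\mathbb{T};\mathbb{C})$ for $p<\infty$, choosing a Laurent polynomial $P$ with $\|f-P\|_{L^p}<\varepsilon/2$. A small perturbation of the coefficients of $P$ lets me assume that its top and bottom exponents $n$ and $m$ satisfy $-n<m<0$ (for instance by trimming the extreme term or adding a tiny one) and that $P$ is not of the exceptional form $q(z^j)$ with $|j|\ge 2$, which is a condition of positive codimension on the coefficients. By Theorem~\ref{self-intersection-thm} the curve $\gamma:=P|_{\mathbb{T}}$ then has only finitely many self-intersections, and a further generic perturbation makes $\gamma$ a $C^\infty$ immersion whose self-intersections are finitely many transverse double points, still within $\varepsilon$ of $f$.

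It remains to replace $\gamma$ by a nearby embedding. Let the double points be $w_1,\dots,w_N$ with $w_i=\gamma(s_i)=\gamma(t_i)$. Around each second parameter $t_i$ I would choose a short, pairwise disjoint arc $I_i\subset\mathbb{T}$ and leave $\gamma$ unchanged on $\mathbb{T}\setminus\bigcup_i I_i$; on this set $\gamma$ is injective, because every $w_i$ is now reached only through $s_i$. The map will be redefined on each $I_i$ so as to reroute the strand through $t_i$ away from $w_i$. The essential point is that I need not keep the rerouted strand close to its original position: since $|I_i|$ is small, the contribution $\int_{I_i}|\tilde\gamma-\gamma|^p$ is at most $\diam(\gamma(\mathbb{T}))^p\,|I_i|$ up to a bounded constant, which tends to $0$ as the arcs shrink, no matter how far the detour travels.

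The detour is built as follows. Write $\Gamma'=\gamma(\mathbb{T}\setminus\bigcup_i I_i)$, a finite disjoint union of Jordan arcs; such a set contains no Jordan curve and hence does not separate the plane, so its complement is open and connected. I would therefore join the images under $\gamma$ of the two endpoints of each $I_i$ by a simple $C^\infty$ arc lying in this complement, choosing the $N$ arcs pairwise disjoint (possible since the complement is connected and there are only finitely many) and matching derivatives smoothly at the endpoints, so that the resulting $\tilde\gamma\colon\mathbb{T}\to\mathbb{C}$ is $C^\infty$. By construction $\tilde\gamma$ agrees with $\gamma$ off $\bigcup_i I_i$, is injective there, and the inserted arcs are mutually disjoint and disjoint from $\Gamma'$; hence $\tilde\gamma$ is an injective immersion of $\mathbb{T}$, i.e. a $C^\infty$ embedding. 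Crucially, only an endpoint-preserving replacement on each $I_i$ was made, so $\tilde\gamma$ remains a single closed curve. The orientation can be prescribed already in the first step by arranging the leading behavior of $P$ to give winding number $+1$, and it is preserved by the local surgery, so $\tilde\gamma$ is orientation-preserving.

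The main obstacle is the simultaneous removal of all self-intersections without creating new ones and without breaking the curve into several components: a transverse crossing cannot be undone by a small $C^0$ perturbation, since the two endpoints of $I_i$ lie on opposite sides of the other strand. The resolution is precisely the use of the $L^p$ topology, which allows each detour to be long, threading through the connected complement of $\Gamma'$, while staying cheap because it is supported on an arc of small measure. The points that require care are making the finitely many detours mutually disjoint and globally embedded while keeping each one simple, and verifying that shrinking the arcs $I_i$ drives $\|\tilde\gamma-\gamma\|_{L^p}$ to $0$.
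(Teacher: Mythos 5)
Your overall strategy coincides with the paper's: approximate $f$ by a Laurent polynomial, perturb it out of the exceptional cases of Theorem~\ref{self-intersection-thm} to get finitely many self-intersections, excise small arcs of $\mathbb T$ so that what remains maps to a disjoint union of simple arcs, reconnect through the connected complement, and pay for the surgery in $L^p$ only through the measure of the excised arcs. The iterative disjointness of the detours, which you assert, is exactly what the paper secures by reconnecting one pair of arcs at a time (after each step one again has a disjoint union of simple arcs, whose complement is connected, so the next detour can be drawn).

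There is, however, a genuine gap in your treatment of orientation. You claim the orientation ``can be prescribed already in the first step by arranging the leading behavior of $P$ to give winding number $+1$'' and that it ``is preserved by the local surgery.'' Neither half is justified: the winding number of a non-injective curve is not a single number (it varies over complementary components, and for a Laurent polynomial it is not read off from the leading term), and the surgery is emphatically \emph{not} local in the image --- each detour is a long arc threading through the plane, so it is not a small perturbation and there is no homotopy-invariance argument available to say any degree is ``preserved.'' The orientation of the final Jordan curve is a global property that your construction, as written, does not control. The fix is the one the paper uses: when only one simple arc remains, its complement is connected and there are exactly two topologically distinct ways to close it up, yielding the two orientations; choose the positive one. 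Two smaller points: (i) your justification ``contains no Jordan curve and hence does not separate the plane'' is false as a general principle for compacta (Wada-type indecomposable continua separate the plane and contain no arcs); the correct argument is that a single simple arc does not separate the plane and then Janiszewski's theorem handles finite disjoint unions. (ii) Your $L^p$ bound $\diam(\gamma(\mathbb T))^p\,|I_i|$ is incompatible with letting the detour travel arbitrarily far; you must confine all detours to a fixed disk $\{|w|<R\}$ containing $\gamma(\mathbb T)$, as the paper does, and note that the complement of the arcs within that disk is still connected.
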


When $p=2$, it follows that one can obtain no quantitative estimates for the Fourier coefficients $\hat f$ based on the fact that $f$ is an embedding, even if its orientation is known. Such estimates are available under additional geometric conditions such as convexity or starlikeness of $f(\mathbb T)$: e.g., the Rad\'o-Kneser-Choquet theorem~\cite[p. 29]{Duren} implies that $|\hat f(1)|>|\hat f(-1)|$ for positively oriented convex curves. The relation between $\hat f$ and the shape of $f(\mathbb T)$ was considered in~\cite{Hall, KovalevYang}. 

\section{Self-intersections of Laurent polynomials}

Consider a Laurent polynomial
\begin{equation}\label{Lpoly}
p(z) = \sum_{k=m}^n a_k z^k,\quad z\in \mathbb C\setminus \{0\}, 
\end{equation}
where $m, n\in \mathbb Z$, $a_m\ne 0$, and $a_n\ne 0$. On the unit circle $\mathbb T$ this can be written as a trigonometric polynomial, 
\begin{equation}\label{Lpolytrig}
p(e^{i\theta}) = \sum_{k=m}^n a_k e^{ik\theta}, \quad \theta\in \mathbb R. 
\end{equation}
We are interested in the self-intersections of the closed parametric curve $p(\mathbb T)=\{p(e^{i\theta})\colon 0\le \theta\le 2\pi\}$.
By definition, a \textit{self-intersection of $p$ on $\mathbb T$} is a two-point subset  $\{z_1, z_2\}\subset \mathbb T$ where $z_1\ne z_2$ and $p(z_1)=p(z_2)$. For example, the image of $\mathbb T$ under $p(z) = z^2 + z^{-1}$ passes through $0$ three times, which counts as three self-intersections, namely $\{e^{\pi i/3}, -1\}$, $\{e^{-\pi i/3}, -1\}$, and $\{e^{\pi i/3}, e^{-\pi i/3}\}$. To motivate this way of counting, observe that the image of $\mathbb T$ under a perturbed function $z^2 + cz^{-1}$ with $c$ close to $1$ has three distinct self-intersections near $0$.

Replacing $\theta$ by $-\theta$ if necessary, we make sure that $n \ge |m|$. Also, since the constant term does not affect self-intersections, we may assume $m\ne 0$. Thus, the case of algebraic polynomials considered by Quine~\cite{Quine73} corresponds to $m=1$. It should be noted that Quine considers the \textit{vertices} of $p$, which are the values attained more than once. The number of vertices may be smaller than the number of self-intersections, but Quine's argument applies to both.  The main result of this paper is the following theorem. 

\begin{theorem}\label{self-intersection-thm} If $-n \le m < n$ and $m\ne0$, the number of self-intersections of the Laurent polynomial~\eqref{Lpoly} on $\mathbb T$ is at most 
\begin{equation}\label{upper-bound-thm}
\begin{cases}
(n-1) \left(n - \frac{m+1}{2}\right),\quad & 1\le m < n  \\
(n - 1) (n - m),\quad & -n < m \le -1 \\
(n - 1) (2n - 1), \quad & m = -n
\end{cases}
\end{equation}
with the following exceptions: (a) $p$ can be written as $q(z^j)$ for some Laurent polynomial $q$ and some integer $j\ne -1, 1$; (b) $n=-m$ and $|a_n| =  |a_m|$. 
\end{theorem}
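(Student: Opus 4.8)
The plan is to algebraize the self-intersection condition and count solutions with Bézout's theorem, then strip away the contributions that do not correspond to genuine self-intersections on $\mathbb T$.

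First I would record that for $z\in\mathbb T$ one has $\overline{p(z)}=\tilde p(z)$, where $\tilde p(z)=\sum_{k=m}^n\overline{a_k}\,z^{-k}$ is the conjugate--reciprocal Laurent polynomial. Hence a pair $\{z,w\}\subset\mathbb T$ with $z\neq w$ is a self-intersection precisely when $(z,w)$ is an off-diagonal common zero, lying on the torus $\mathbb T\times\mathbb T$, of the two equations $p(z)=p(w)$ and $\tilde p(z)=\tilde p(w)$. Dividing each by $z-w$ (which removes the trivial diagonal solutions) and multiplying by suitable powers of $z,w$ to clear denominators yields honest polynomials $A(z,w)$ and $B(z,w)$; a short computation shows both have bidegree $(n-m-1,\,n-m-1)$ when $m<0$ and bidegree $(n-1,n-1)$ when $m>0$. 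The number of ordered self-intersections is then at most the number of common zeros of $A,B$ in $\mathbb C^2$, which is the quantity I will estimate, dividing by $2$ at the end to pass from ordered pairs $(z,w),(w,z)$ to unordered self-intersections $\{z,w\}$.

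The counting itself rests on Bézout's theorem on $\mathbb P^1\times\mathbb P^1$ (equivalently, bounding the degree of $\mathrm{Res}_w(A,B)$): two curves of bidegrees $(d,d)$ meet in $2d^2$ points counted with intersection multiplicity. The raw count $2(n-m-1)^2$ (resp.\ $2(n-1)^2$) exceeds the target, so the crux is to locate and subtract the forced common zeros that are \emph{not} genuine torus self-intersections---namely those lying over the coordinate points $z,w\in\{0,\infty\}$ of $\mathbb P^1$, together with their intersection multiplicities. The conjugate--reciprocal relation between $A$ and $B$ (that is, $B$ is obtained from $A$ by $(z,w)\mapsto(1/z,1/w)$ together with conjugation of coefficients) forces a predictable block of common zeros at infinity; carrying out this local multiplicity computation at the four boundary points is what produces the three different expressions in~\eqref{upper-bound-thm}, and in particular the improvement from $2n$ to $2n-1$ in the symmetric case $m=-n$. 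As a consistency check, for $m=1$ the raw count $2(n-1)^2$ already equals twice Quine's bound, so no subtraction occurs there, while for growing $m>0$ the subtracted corner multiplicity grows linearly in $m$, matching the term $\frac{m+1}{2}$.

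I expect the main obstacle to be exactly this intersection-multiplicity accounting at the coordinate points, since it must be sharp (the bound is attained) and must be carried out uniformly across the three regimes of $m$. A secondary but essential point is to show that $A$ and $B$ have no common polynomial factor outside the exceptional cases: a common factor means infinitely many self-intersections and makes $\mathrm{Res}_w(A,B)\equiv 0$, so the whole scheme collapses. I would prove that a common factor forces either the substitution symmetry $p(z)\equiv p(\zeta z)$ for some root of unity $\zeta\neq 1$---equivalent to case~(a), $p=q(z^j)$---or, when $n=-m$, the boundary coincidence $|a_n|=|a_m|$ of case~(b); the model example $p(z)=z+z^{-1}=2\cos\theta$, whose image degenerates to a doubly covered segment, shows that case~(b) genuinely must be excluded. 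Verifying that these are the only ways a common factor can arise, by inspecting the extreme (Newton-polygon corner) coefficients of $A$ and $B$, completes the argument.
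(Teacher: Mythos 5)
Your setup is essentially Quine's original one: work with the ordered pair $(z,w)$, form $A=(zw)^{\max(-m,0)}\bigl(p(z)-p(w)\bigr)/(z-w)$ and its conjugate--reciprocal partner $B$, and count common zeros by Bézout on $\mathbb P^1\times\mathbb P^1$ with corrections at the four corner points. The paper instead substitutes $z=e^{i\theta}u$, $w=e^{-i\theta}u$, $t=\cos\theta$, producing two polynomials $g(t,z)$, $g^*(t,z)$ built from Chebyshev polynomials, and applies Bézout in $\mathbb{CP}^2$ with corrections only at $(1,0,0)$ and $(0,1,0)$. Your route is viable for $-n<m<0$: the bidegrees you state are correct, the corner orders work out to $-m-1$ and $n-1$ at $(0,0)$ and at $(\infty,\infty)$ and to $-m$ and $-m$ at the two mixed corners, and the arithmetic closes exactly to $2(n-1)(n-m)$, i.e.\ to the bound $(n-1)(n-m)$ after halving. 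For $1\le m<n$ only $(0,0)$ and $(\infty,\infty)$ contribute, each with multiplicity at least $(m-1)(n-1)$, and the computation yields $(n-1)(n-m)$ rather than your heuristic ``$\tfrac{m+1}{2}$''; this is \emph{stronger} than the stated bound (the paper's $m\ge1$ case is just $\tfrac12\deg g\deg g^*$ with no correction at infinity), so it only helps you, but it does indicate the local computation was not actually carried out.

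The genuine gap is the case $m=-n$. There $A$ and $B$ have bidegree $(2n-1,2n-1)$, Bézout gives $2(2n-1)^2$, and the four corner products are $(n-1)^2,(n-1)^2,n^2,n^2$; subtracting and halving leaves $2n(n-1)$, which exceeds the claimed $(n-1)(2n-1)$ by $n-1$. The shortfall cannot be recovered from the ``product of orders'' estimate: at $(0,0)$ the lowest-order forms of $A$ and $B$ are $-a_{-n}\,\frac{z^n-w^n}{z-w}$ and $-\overline{a_n}\,\frac{z^n-w^n}{z-w}$ (from the $k=-n$ term of $A$ and the $k=n$ term of $B$), so the two curves share their entire tangent cone there and the local intersection number strictly exceeds $(n-1)^2$ --- but by an amount governed by higher-order terms, and extracting exactly $n-1$ extra at each of $(0,0)$ and $(\infty,\infty)$ is a substantially harder local analysis that your outline does not address. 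This is also where exception (b) actually enters: the paper sidesteps the whole issue by composing with $\psi(w)=w+c\overline{w}$, $c=-a_{-n}/\overline{a_n}$, which annihilates the $z^{-n}$ coefficient and reduces to $m>-n$; $\psi$ is invertible precisely when $|a_n|\ne|a_{-n}|$, i.e.\ outside case (b). Your plan derives (b) from the common-factor analysis instead, but $|a_n|=|a_{-n}|$ does not in general force $A$ and $B$ to share a factor, so (b) cannot be recovered that way. Finally, the coprimality of $A$ and $B$ outside case (a) --- which you rightly call essential --- is only asserted; the paper proves the analogous fact by computing the degree of the resultant from the Sylvester matrix and by analyzing the zeros of $U_{n-1}$ (which is how case (a) actually arises), and a comparable argument is needed in your setting.
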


\begin{remark}
If $p=q(z^j)$ with $j\ne -1, 1$, the polynomial $p$ traces a closed curve more than once, thus creating uncountably many self-intersections. If $n=-m$ and $|a_n|=|a_m|$, the number of self-intersections may also be infinite: consider $p(z) = q(z+1/z)$ where $q$ is an algebraic polynomial of degree $n$. This polynomial has self-intersections $p(z)=p(1/z)$, for all $z\in \mathbb T\setminus \{-1, 1\}$. 
\end{remark}

The sharpness of Theorem~\ref{self-intersection-thm} is discussed in Section~\ref{lower-bounds-sec}. Its proof requires preliminary lemmas involving Chebyshev polynomials and resultants. 

Let $U_n$, $n\in \mathbb N$, be the Chebyshev polynomial of second kind of degree $n$. Recall that 
\begin{equation}\label{Chebyshev-U-cos}
U_n(\cos \theta) = \frac{\sin((n+1)\theta)}{\sin \theta}.    
\end{equation}
By convention, $U_{-1}\equiv 0$ and $U_{-n-1} = -U_{n-1}$ for $n\in \mathbb N$; both of these formulas are consistent with~\eqref{Chebyshev-U-cos}.

\begin{lemma}\label{g-lemma} Consider a Laurent polynomial~\eqref{Lpoly} with $-n<m\le n$, $m\ne 0$, $a_n\ne 0$, and $a_m\ne 0$. Let 
\begin{equation}\label{g-note}
g(t, z) = \sum_{k=m}^{n} a_k U_{k-1}(t) z^{k-m},
\end{equation}
\begin{equation}\label{gs-note}
g^*(t, z) = z^{n-m} \overline{g(\bar t, 1/\bar z)} = \sum_{k=m}^{n} \overline{a_k} U_{k-1}(t) z^{n-k}.
\end{equation}
Then, with $t=\cos \theta$, we have
\begin{equation}\label{g-notation}
g(t, z) = z^{-m} \frac{p(e^{i\theta} z) - p(e^{-i\theta} z)}{e^{i\theta} - e^{-i\theta}}.
\end{equation}
Also, $g$ is a polynomial in $t, z$ of total degree $2n-m-1$, and $g^*$ is a polynomial in $t, z$ of total degree 
$n - m + |m| - 1$.  Finally, if $g$ and $g^*$ are considered as elements of $\mathbb C[t][z]$, their resultant is a polynomial of degree $2(n-1)(n-m)$ in $t$.  
\end{lemma}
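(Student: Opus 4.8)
The identity \eqref{g-notation} will follow by expanding $p(e^{i\theta}z)-p(e^{-i\theta}z)=\sum_{k} a_k z^k(e^{ik\theta}-e^{-ik\theta})$, dividing by $e^{i\theta}-e^{-i\theta}=2i\sin\theta$, and using $\sin(k\theta)/\sin\theta=U_{k-1}(\cos\theta)$ together with the stated conventions $U_{-1}\equiv0$ and $U_{-n-1}=-U_{n-1}$, which handle the terms with $k\le 0$. The two degree assertions reduce to the single observation that, for $k\ne0$, the polynomial $U_{k-1}$ has degree $|k|-1$, while for $k=0$ the corresponding term vanishes. Adding the $t$-degree $|k|-1$ to the $z$-degree $k-m$ (respectively $n-k$) term by term and maximizing over $m\le k\le n$ then yields $2n-m-1$ (respectively $n-m+|m|-1$); I record for later use that $\deg_t U_{k-1}=|k|-1$ whenever $k\ne0$.

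For the resultant, the plan is to read its degree directly off the Sylvester matrix. Since both $g$ and $g^*$ have $z$-degree $n-m$ (their leading $z$-coefficients are $a_nU_{n-1}(t)$ and $\overline{a_m}U_{m-1}(t)$, neither of which is the zero polynomial), the Sylvester matrix $S$ has size $2(n-m)\times 2(n-m)$ and $\operatorname{Res}_z(g,g^*)=\det S\in\mathbb C[t]$. Indexing its columns $1,\dots,2(n-m)$, the entry of the top block (the $n-m$ rows coming from $g$) in row $i$ and column $c$ is $a_{n-c+i}\,U_{n-c+i-1}(t)$, of $t$-degree $|n-c+i|-1$, and the entry of the bottom block (the $n-m$ rows coming from $g^*$) in block-row $i'$ and column $c$ is $\overline{a_{m+c-i'}}\,U_{m+c-i'-1}(t)$, of $t$-degree $|m+c-i'|-1$. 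In either block the integer inside the absolute value runs over $\{m,m+1,\dots,n\}$, so, because $-n<m\le n$ forces $|m|\le n$, every entry of $S$ has $t$-degree at most $n-1$.

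Expanding $\det S$ as a signed sum over permutations, each term is a product of $2(n-m)$ entries and therefore has $t$-degree at most $2(n-m)(n-1)=2(n-1)(n-m)$, which gives the upper bound. For the matching lower bound I will show that exactly one permutation attains this degree, so that no cancellation can occur. A permutation term reaches degree $2(n-m)(n-1)$ only if every chosen entry has degree exactly $n-1$, i.e. the integer inside the absolute value equals $\pm n$; since that integer lies in $\{m,\dots,n\}$ and $m>-n$, it cannot equal $-n$, hence it must equal $+n$. In the top block $n-c+i=n$ forces $c=i$, and in the bottom block $m+c-i'=n$ forces $c=n-m+i'$; the top rows then occupy columns $1,\dots,n-m$ and the bottom rows occupy columns $n-m+1,\dots,2(n-m)$, so these constraints single out one genuine permutation (the block-diagonal one). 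Along it every top entry equals $a_nU_{n-1}(t)$ and every bottom entry equals $\overline{a_n}U_{n-1}(t)$, whose leading terms are $a_n2^{n-1}t^{n-1}$ and $\overline{a_n}2^{n-1}t^{n-1}$. Thus the coefficient of $t^{2(n-1)(n-m)}$ in $\det S$ is $\pm|a_n|^{2(n-m)}2^{2(n-1)(n-m)}$, which is nonzero because $a_n\ne0$, and the degree is exactly $2(n-1)(n-m)$.

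The one point demanding care—and the main obstacle—is this non-cancellation at the top degree. Everything hinges on the fact that the maximal value $n$ inside the absolute value is attainable in exactly one position per row and that these positions assemble into a single valid permutation; this is precisely where the strict hypothesis $m>-n$ enters, by ruling out the competing value $-n$ that would otherwise produce a second maximal position. Once the uniqueness of the top-degree permutation is established, the leading coefficient is a single nonzero product rather than a sum of terms that might conspire to vanish, and the claimed degree follows.
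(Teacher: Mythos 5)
Your proposal is correct and follows essentially the same route as the paper: the identity via $\frac{\sin k\theta}{\sin\theta}=U_{k-1}(\cos\theta)$, the degree counts term by term (no cancellation since distinct $k$ give distinct powers of $z$), and the resultant degree read off the $2(n-m)\times 2(n-m)$ Sylvester matrix. Your treatment of the determinant is a more explicit version of the paper's observation that the diagonal entries have $t$-degree $n-1$ while off-diagonal entries have smaller degree (which is where $m>-n$ enters, exactly as you note), so the identity permutation alone carries the top degree.
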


\begin{proof} The property~\eqref{g-notation} follows by expanding $p(e^{\pm i\theta} z)$ into a sum and observing that
\[
\frac{e^{ik\theta} - e^{-ik\theta}}{e^{i\theta} - e^{-i\theta}} = \frac{\sin k\theta}{\sin \theta} = U_{k-1}(t).
\]
The  monomial of highest degree in $g$ 
comes from multiplying the leading term of $U_{n-1}(t)$ by $a_nz^{n-m}$. This leading term is $(2t)^{n-1}$ (see, e.g. \cite[2.1.E.10(g), p.~37]{BorweinErdelyi}), and therefore the leading monomial in $g$ is $a_n (2t)^{n-1}z^{n-m}$.  

Concerning $g^*$, note that the total degree of $U_{k-1}(t) z^{n-k}$ is $|k|-1 + (n-k)$, which is strictly decreasing for negative $k$ and constant for positive $k$. Thus, if $m<0$, the monomial of highest degree  in $g^*$ is $-\overline{a_m}(2t)^{-m-1} z^{n-m}$, which has degree $n - 2m -1$. If $m>0$, then the highest degree is $n-1$, which is is achieved by multiple monomials. Their sum is
\begin{equation}\label{max-deg-gstar} 
 \sum_{k=m}^n \overline{a_k} (2t)^{k-1}z^{n-k}. 
\end{equation}
There is no cancellation between monomials in~\eqref{max-deg-gstar}. 

For the computation of the resultant of $g$ and $g^*$, write down their Sylvester matrix (see e.g. \cite[(1.12), Chapter 12, p. 400]{Gelfand} or \cite[Chapter 1, p.~24]{Walker}) as
\[
\begin{pmatrix}
a_n U_{n-1} & \cdots  & a_m U_{m-1} & \\
\ddots & & \ddots & \\
& a_n U_{n-1} & \cdots  & a_m U_{m-1} \\ 
\overline{a_m} U_{m-1} & \cdots  & \overline{a_n} U_{n-1} & \\
\ddots & & \ddots & \\
& \overline{a_m} U_{m-1} & \cdots  & \overline{a_n} U_{n-1}  
\end{pmatrix}
\]
which is a matrix of size $2(n-m)$ where the diagonal elements are of degree $n-1$ in $t$, while off-diagonal elements are of degree less than $n-1$. It follows that the determinant of this matrix is a polynomial of degree $2(n-1)(n-m)$ in $t$.
\end{proof}

\begin{lemma}\label{self-intersections} Let $p$, $g$, $g^*$ be as in Lemma~\ref{g-lemma}. Given a self-intersection of $p_{|\mathbb T}$, write it in the form $\{ e^{i\theta}z, e^{-i\theta}z \}$ where $z\in \mathbb T$ and $e^{i\theta}\in \mathbb T\setminus \{-1, 1\}$. Let $t=\cos \theta$. Then $g(t, z)=g^*(t, z) = g(-t, -z) = g^*(-t, -z)=0$, i.e., the algebraic curves $g=0$ and $g^*=0$ intersect at the points $(t, z)$ and $(-t, -z)$. Different self-intersections correspond to different pairs $\{(t, z), (-t, -z)\}$.  
\end{lemma}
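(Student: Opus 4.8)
The plan is to set up the coordinate change that converts a self-intersection into a common zero of $g$ and $g^*$, check the four vanishing conditions by direct substitution, and then establish injectivity by reconstructing the self-intersection from $(t,z)$. First I would record how an arbitrary self-intersection $\{z_1,z_2\}\subset\mathbb T$ is brought into the stated normal form. Taking $z$ to be either square root of $z_1 z_2$ and $e^{i\theta}$ the corresponding square root of $z_1/z_2$, one gets $z_1=e^{i\theta}z$ and $z_2=e^{-i\theta}z$; both $z$ and $e^{i\theta}$ lie on $\mathbb T$ since $|z_1|=|z_2|=1$, and $e^{i\theta}\ne\pm1$ because $z_1\ne z_2$ forces $z_1/z_2\ne1$. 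The only freedom is the choice of square root: replacing $z$ by $-z$ replaces $e^{i\theta}$ by $-e^{i\theta}$, i.e.\ $\theta$ by $\theta+\pi$ and hence $t=\cos\theta$ by $-t$. This is exactly why the two coordinate points $(t,z)$ and $(-t,-z)$ are attached to a single self-intersection.

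Next I would verify the vanishing. Since $e^{i\theta}z=z_1$ and $e^{-i\theta}z=z_2$ with $p(z_1)=p(z_2)$, the numerator in the identity \eqref{g-notation} is zero while the denominator $e^{i\theta}-e^{-i\theta}=2i\sin\theta$ is nonzero (as $e^{i\theta}\ne\pm1$); hence $g(t,z)=0$. Because $t=\cos\theta$ is real and $z\in\mathbb T$ gives $\bar t=t$ and $1/\bar z=z$, the defining relation \eqref{gs-note} yields $g^*(t,z)=z^{n-m}\overline{g(t,z)}=0$. For the twin point I would invoke the parity of the Chebyshev polynomials, $U_{k-1}(-t)=(-1)^{k-1}U_{k-1}(t)$: substituting into \eqref{g-note} and \eqref{gs-note} shows $g(-t,-z)=(-1)^{m+1}g(t,z)$ and $g^*(-t,-z)=(-1)^{n-1}g^*(t,z)$, since the relevant signs $(-1)^{k-1}(-1)^{k-m}$ and $(-1)^{k-1}(-1)^{n-k}$ are independent of $k$. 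Thus $g$ and $g^*$ vanish at $(-t,-z)$ as well; alternatively, $(-t,-z)$ is precisely the normal form produced by the other square root, so it inherits the same vanishing.

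Finally I would prove injectivity, the one part needing a small argument rather than a substitution. From the normal form, $z_1 z_2=z^2$ and $z_1+z_2=(e^{i\theta}+e^{-i\theta})z=2tz$, so $z_1$ and $z_2$ are the two roots of $w^2-2tz\,w+z^2=0$. Hence the unordered pair $\{z_1,z_2\}$ is determined by $(t,z)$, and since $(-t,-z)$ yields the identical quadratic, the pair $\{(t,z),(-t,-z)\}$ recovers one and the same self-intersection. Consequently distinct self-intersections give distinct pairs, as claimed.

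I expect the main (though modest) obstacle to be the sign bookkeeping in the parity computations and in confirming that the two square-root choices fold together into the single pairing $\{(t,z),(-t,-z)\}$; everything else follows directly from \eqref{g-notation}, \eqref{gs-note}, and the definition of a self-intersection.
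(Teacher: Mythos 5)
Your proof is correct and follows essentially the same route as the paper's: vanishing of $g(t,z)$ from \eqref{g-notation}, vanishing of $g^*(t,z)$ from \eqref{gs-note} using $1/\bar z=z$, the symmetry $(t,z)\mapsto(-t,-z)$ coming from the two normal forms of the same unordered pair, and injectivity from the fact that $(t,z)$ determines $\{e^{i\theta}z,e^{-i\theta}z\}$. Your extra details (the Chebyshev parity computation, which also holds for negative indices under the stated convention, and the quadratic $w^2-2tzw+z^2$) are accurate but only make explicit what the paper states more briefly.
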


\begin{proof} The identity~\eqref{g-notation} implies $g(t, z)=0$. Since $z=1/\bar z$, we also have $g^*(t, z)=0$ from~\eqref{gs-note}. Since the self-intersection 
$\{ e^{i\theta}z, e^{-i\theta}z \}$ can also be written as $\{ e^{i(\pi-\theta)}(-z), e^{-i(\pi-\theta)}(-z) \}$, it follows that $g$ and $g^*$ vanish at $(-t, -z)$ as well. Finally, the pair $(t, z)$ determines $(\theta, z)$ up to replacing $\theta$ with $\pm \theta + 2\pi k$, $k\in \mathbb Z$, which does not change the 
self-intersection set $\{ e^{i\theta}z, e^{-i\theta}z \}$.
\end{proof}

\begin{proof}[Proof of Theorem~\ref{self-intersection-thm}]
We begin with the case $n=-m$. For $z\in \mathbb T$ the Laurent polynomial $p$ agrees with the harmonic polynomial
\begin{equation}\label{harmonic-p}
    p_h(z) = \sum_{k=1}^n (a_k z^k + a_{-k} {\bar z}^{k}).
\end{equation}
Let $\psi(w) = w + c\overline{w}$, where $c=-a_{-n}/\overline{a_n}$. This is an invertible $\mathbb R$-linear transformation of the plane, with the inverse $\psi^{-1}(\zeta) =  (\zeta-c\overline{\zeta})/(1-|c|^2)$. 
We have 
\[
\psi\circ p_h(z) = 
\sum_{k=1}^n ((a_k + c \overline{a_{-k}}) z^k + 
(a_{-k} +  c \overline{a_k}) {\bar z}^{k}),
\]
where the coefficient of $\bar z^n$ vanishes by the choice of $c$. Returning to the Laurent polynomial form, we have for $z\in \mathbb T$, 
\[
\psi\circ p(z) = 
\sum_{k=1}^n (a_k + c \overline{a_{-k}}) z^k + 
\sum_{k=1}^{n-1} (a_{-k} +  c \overline{a_k}) {z}^{-k}. 
\]
If $\psi\circ p_{|\mathbb T}$ depends only on $z^j$ for some $j\in \mathbb Z\setminus \{-1, 1\}$, then by applying the inverse transformation $\psi^{-1}$ we conclude that the original polynomial $p$ had the same property, i.e., exceptional case (a) holds. Apart from this exceptional case, we can apply Theorem~\ref{self-intersection-thm} to $\psi\circ p$, with $m>-n$. The bound provided by~\eqref{upper-bound-thm} is largest when $m=1-n$, when it is equal to $(n-1)(2n-1)$. This completes the case $n=-m$. 

From now on, $-n<m < n$. Let $g$ and $g^*$ be as in Lemma~\ref{g-lemma}. The polynomials $g$ and $g^*$ are relatively prime in $\mathbb C[t][z]$, for otherwise their resultant would be identically zero, contradicting Lemma~\ref{g-lemma}. We also want to show they are relatively prime in $\mathbb C[t, z]$. If not, there is a nonconstant polynomial $h\in \mathbb C[t]$ that divides both $g$ and $g^*$. Let $t_0$ be a zero of $h$. Then $g(t_0, z)=0$ for all $z$, which in view of~\eqref{g-note} implies $U_{n-1}(t_0)=0$. The definition of the Chebyshev polynomial $U_{n-1}$ implies that $t_0 = \cos(\pi k/n)$ for some integer $1\le k \le n-1$. By virtue of~\eqref{g-notation} we have $p(e^{2k \pi i/n}z) = p(z)$ for all $z$. Comparing the coefficients of these Laurent polynomials, we conclude that $p$ can be written in the form $p(z) = q(z^j)$ where $j$ is such that $e^{2k \pi i/n}$ is a primitive $j$th root of unity, and $q$ is a Laurent polynomial. This is the exceptional case (a) of the theorem.   

Thus, $g$ and $g^*$ are relatively prime in $\mathbb C[t, z]$. By Bezout's theorem ~\cite[Chapter 3, Theorem~3.1, p.~59]{Walker}  they have at most $\deg g\, \deg g^*$ common zeros. By Lemma~\ref{self-intersections}, the number of self-intersections of $p_{|\mathbb T}$ is at most $\frac12\deg g\, \deg g^*$. This proves the case $m\ge 1$ of~\eqref{upper-bound-thm}. The case $m\le -1$ requires additional consideration of the intersection between $g=0$ and $g^*=0$ at infinity, similar to the proof of Theorem~3 in~\cite{Quine76}. 

Recalling~\eqref{g-note} and~\eqref{gs-note}, we can write the polynomials $g$ and $g^*$ 
in terms of homogeneous coordinates $(t, z, w)$ as follows:
\begin{equation}\label{G-homog}
G(t, z, w) = w^{2n-m-1} g(t/w, z/w)
= \sum_{k=m}^n a_k U_{k-1}(t/w) z^{k-m} w^{2n-k-1}
\end{equation}
and
\begin{equation}\label{Gs-homog}
G^*(t, z, w) = w^{n-2m-1} g^*(t/w, z/w)
= \sum_{k=m}^n a_k U_{k-1}(t/w) z^{n-k} w^{k-2m-1}.
\end{equation}

Since $w^{|k|-1}U_{k-1}(t/w)$ is a polynomial, the index-$k$ term in~\eqref{G-homog} is divisible by $z^{k-m}w^{2n-k-|k|}$ which is a monomial of degree $2n-m-|k| \ge n-m$. Thus, $G$ has a zero of order $n-m$ at the point $(t, z, w) = (1, 0, 0)$ of the projective space $\mathbb {CP}^2$. Similarly, 
the index-$k$ term of~\eqref{Gs-homog} is divisible by the monomial $z^{n-k}w^{k - 2m - |k|}$ of degree $n-2m-|k| \ge -2m$. Thus, $G^*$ has a zero of order $-2m$ at the point $(1, 0, 0)$ of $\mathbb {CP}^2$. By Theorem~5.10 in~\cite[p.~114]{Walker}, the curves $G=0$ and $G^*=0$ have an intersection of multiplicity at least $(-2m)(n-m)$ at $(1, 0, 0)$. 

Since the index-$k$ term in the sum~\eqref{G-homog} has degree $2n-k-1\ge n-1$ in $t$ and $w$ jointly, it follows that $G$ has a zero of order $n-1$ at $(0, 1, 0)$. Also, the index $k$ term in~\eqref{Gs-homog} has degree $k - 2m - 1\ge -m - 1$ in $t$ and $w$ jointly, which implies that $G$ has a zero of order $n-1$ at $(0, 1, 0)$. (As usual, a zero of order $0$ is not a zero at all.) This results in the intersection multiplicity at least $(n-1)(-m-1)$ at $(0, 1, 0)$. 

Subtracting the intersections at $(1, 0, 0)$ and $(0, 1, 0)$ from the total number $\deg g \deg g^*$ given by Bezout's theorem, we conclude that the curves $g=0$ and $g^*=0$ have at most 
\[
(2n-m-1)(n-2m-1) + 2m(n-m) + (n-1)(m+1) = 2(n-1)(n-m)
\]
intersections in the affine plane $\mathbb C^2$. By Lemma~\ref{self-intersections}, the number of  self-intersections of $p_{|\mathbb T}$ is bounded by $(n-1)(n-m)$, in agreement with~\eqref{upper-bound-thm}. This completes the proof of Theorem~\ref{self-intersection-thm}.
\end{proof}

\section{Lower bound on the number of self-intersections}\label{lower-bounds-sec}

The case of algebraic polynomials, considered by Quine~\cite{Quine73}, corresponds to $m=1$ in Theorem~\ref{self-intersection-thm}, when the estimate on the number of self-intersections is $(n-1)^2$. This bound is attained by $z^n+\epsilon z$ for small $\epsilon$, as shown in~\cite{Quine73}. 

The following proposition implies that the bound provided by Theorem~\ref{self-intersection-thm} is also sharp when $m$ is negative and coprime to $n$.

\begin{proposition} Suppose $n, m\in \mathbb Z$, $n > |m|\ge 1$, and $\gcd(n, m)=1$. Then for sufficiently small $\epsilon>0$ the Laurent polynomial $p(z) = z^n + \epsilon z^m$ has $(n-1)(n-m)$ self-intersections on $\mathbb T$. 
\end{proposition}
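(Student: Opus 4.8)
The plan is to count self-intersections of $p(z) = z^n + \epsilon z^m$ directly for small $\epsilon$ by solving the equation $p(z_1) = p(z_2)$ on $\mathbb{T}$ and showing the number of solutions equals the upper bound $(n-1)(n-m)$ from Theorem~\ref{self-intersection-thm}. Since the theorem already gives this number as an upper bound (and $\gcd(n,m)=1$ rules out exceptional case (a), while $n > |m|$ with $\epsilon$ small generically avoids case (b)), it suffices to exhibit at least $(n-1)(n-m)$ distinct self-intersections. First I would parameterize $z_1 = e^{i\alpha}$, $z_2 = e^{i\beta}$ and write the condition $p(z_1)=p(z_2)$ as two real equations coming from the real and imaginary parts. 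Using the substitution $\alpha = \theta + s$, $\beta = \theta - s$ (so the self-intersection is $\{e^{i(\theta+s)}, e^{i(\theta-s)}\}$ with $s \ne 0$), the equation $p(z_1)=p(z_2)$ becomes, after dividing by the common factor $e^{in\theta}$ respectively $e^{im\theta}$,
\begin{equation}\label{prop-system}
e^{in\theta}\sin(ns) + \epsilon\, e^{im\theta}\sin(ms) = 0,
\end{equation}
which is a single complex equation in the real unknowns $\theta$ and $s$.

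The key idea is a perturbation/continuity argument starting from $\epsilon = 0$. When $\epsilon=0$, the curve is $z \mapsto z^n$, traced $n$ times, and its self-intersections on $\mathbb{T}$ occur exactly when $e^{in(\theta+s)} = e^{in(\theta-s)}$, i.e. $\sin(ns)=0$ with $s \not\equiv 0$; these are highly degenerate. The strategy is instead to treat~\eqref{prop-system} as forcing $\sin(ns) = O(\epsilon)$, so $ns$ is close to a multiple of $\pi$: writing $s = \pi\ell/n + \delta$ for integer $\ell$ with $1 \le \ell \le n-1$ and small $\delta$, the equation determines $\theta$ and $\delta$. Substituting, the leading balance gives $e^{i(n-m)\theta} \approx -\epsilon\,(-1)^{\ell}\sin(m\pi\ell/n)/(n\delta)$ together with a phase condition; taking moduli fixes $|\delta|$ in terms of $\epsilon$, and the phase condition $\,(n-m)\theta \equiv \text{const} \pmod{2\pi}$ yields $n-m$ distinct values of $\theta$ for each admissible $\ell$. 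I would verify that $\sin(m\pi\ell/n) \ne 0$ for $1 \le \ell \le n-1$, which is exactly where $\gcd(n,m)=1$ enters: it guarantees $m\ell/n \notin \mathbb{Z}$, so no solution branch degenerates.

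Counting, the admissible values $\ell = 1, \dots, n-1$ give $n-1$ choices, and for each the phase equation produces $n-m$ values of $\theta$ modulo $2\pi$, for a total of $(n-1)(n-m)$ solutions $(\theta, s)$. I would then check that these correspond to distinct self-intersections as unordered pairs: the map $(\theta, s) \mapsto \{e^{i(\theta+s)}, e^{i(\theta-s)}\}$ identifies $(\theta, s)$ with $(\theta, -s)$, so I restrict to $0 < s < \pi$ (equivalently $1 \le \ell \le n-1$ with the sign of $\delta$ fixed) to avoid double-counting, and confirm the remaining pairs are pairwise distinct for small $\epsilon$ by a continuity argument showing the solution branches stay separated. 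The main obstacle will be making the implicit-function/perturbation argument rigorous near the degenerate point $\epsilon = 0$: since the unperturbed equation $\sin(ns)=0$ is not a transverse condition jointly in $(\theta, s)$, the naive implicit function theorem does not apply directly, and one must carefully track how each of the $(n-1)(n-m)$ branches bifurcates from the degenerate locus and verify they are genuine (real $\theta$, real $s$) solutions rather than complex artifacts. Since the upper bound already caps the count at $(n-1)(n-m)$, producing this many genuinely distinct real solutions forces equality, and no separate verification of simplicity or non-coincidence beyond distinctness is needed.
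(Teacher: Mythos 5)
Your reduction is essentially the paper's: your complex equation $e^{in\theta}\sin(ns)+\epsilon e^{im\theta}\sin(ms)=0$ is exactly the condition $g(\cos s,e^{i\theta})=0$ coming from \eqref{g-notation}, and the paper also counts solutions by perturbing off the zeros of $\sin(ns)$, with $\gcd(n,m)=1$ entering precisely where you say it does. One of your worries is unfounded: the ``main obstacle'' of a degenerate implicit-function argument does not arise, because the system decouples. Writing the equation as $e^{i(n-m)\theta}=-\epsilon\sin(ms)/\sin(ns)$, the right-hand side is real, so a solution exists iff $|\sin(ns)|=\epsilon|\sin(ms)|$ (a single real equation in $s$ alone, solvable by elementary sign-change counting near each $s=\pi\ell/n$, as in \eqref{g-mn2}), and then $\theta$ is read off from $e^{i(n-m)\theta}=\pm1$, giving $n-m$ values. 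No bifurcation analysis is needed.

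The genuine gap is in your distinctness step. The unordered pair $\{e^{i(\theta+s)},e^{i(\theta-s)}\}$ is invariant not only under $(\theta,s)\mapsto(\theta,-s)$ but also under $(\theta,s)\mapsto(\theta+\pi,s+\pi)$; composing the two gives $(\theta,s)\sim(\theta+\pi,\pi-s)$, which preserves the strip $0<s<\pi$. So restricting to $0<s<\pi$ does \emph{not} avoid double-counting: the branch at $\ell$ with $\delta>0$ is identified with the branch at $n-\ell$ with $\delta<0$, and in the strip you actually find $2(n-1)(n-m)$ solution points $(\theta,s)$ representing only $(n-1)(n-m)$ distinct pairs. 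Your parenthetical remedy (``$1\le\ell\le n-1$ with the sign of $\delta$ fixed'') does happen to pick one representative per equivalence class, but it is not equivalent to $0<s<\pi$ as you assert, and without identifying the second symmetry you cannot justify why it works --- or why the apparent count of $2(n-1)(n-m)$ does not contradict the upper bound. The paper handles this by setting $t=\cos s$ and restricting to $0<t<1$, i.e. $0<s<\pi/2$, which genuinely selects one representative per class (this is the content of the last sentence of Lemma~\ref{self-intersections}, recording the identification $(t,z)\sim(-t,-z)$); the resulting count of $n-1$ admissible values of $s$ then needs the slightly careful parity bookkeeping at $s=\pi/2$ that the paper carries out. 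Fixing your argument requires either switching to that restriction or explicitly proving the orbit structure of the symmetry group on your $2(n-1)(n-m)$ branches.
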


\begin{proof} The polynomial $g$ from~\eqref{g-note} takes the form 
\begin{equation}\label{g-mn1}
  g(t, z) = U_{n-1}(t) z^n + \epsilon U_{m-1}(t)z^m = \left(\frac{\sin n\theta}{\sin m\theta} z^{n-m} + \epsilon\right)
  \frac{\sin m\theta}{\sin \theta} z^m  
\end{equation}
where $t=\cos \theta$. Note that $U_{n-1}$ and $U_{m-1}$ have no common zeros because $\gcd(n, m)=1$. Therefore, any solution of $g(t, z)=0$ with $|z|=1$ and $0<t<1$ arises from
\begin{equation}\label{g-mn2}
\frac{\sin n\theta}{\sin m\theta} = \pm \epsilon, 
\quad 0 < \theta < \frac{\pi}{2}. 
\end{equation}
The zeros of the left-hand side of~\eqref{g-mn2} on $[0, \pi/2]$ are $\pi k/n$ for $1\le k\le \lfloor n/2\rfloor $. It follows that for small enough $\epsilon$,~\eqref{g-mn2} holds at $n-1$ points of $(0, \pi/2)$. Indeed, there are two such points near $\pi k/n$ with $1\le k<\lfloor n/2\rfloor$, and one such point next to $\pi/2$ (only if $n$ is even). This adds up to $2(n/2-1)+1 = n-1$ when $n$ is even, and $2(n-1)/2 = n-1$ when $n$ is odd. 

Thus, we have $n-1$ values of $t\in (0, 1)$ for which $|U_{n-1}(t)| = \epsilon|U_{m-1}(t)|$, and for each of them there are $(n-m)$ values os $z$ (roots of either $1$ or $-1$) such that \eqref{g-mn1} turns into $0$. Each such pair $(t, z)$ produces a self-intersection of $p_{|\mathbb T}$ by virtue of~\eqref{g-notation}, and all these self-intersections are distinct by Lemma~\ref{self-intersections}. In conclusion, there are $(n-1)(n-m)$  self-intersections of $p_{|\mathbb T}$.
\end{proof}

We do not know whether Theorem~\ref{self-intersection-thm} is sharp when $m$ and $n$ are not coprime, or when $m>1$.  
 
\section{Approximating closed curves by Jordan curves}\label{approximation-sec}

Let $\mathcal E(\mathbb T;\mathbb C)$ be the set of all circle embeddings, i.e., continuous injective maps of $\mathbb T$ into $\mathbb C$. It is well known that continuous maps are dense in $L^p(\mathbb T; \mathbb C)$ for $1\le p<\infty$. In this section we prove that $\mathcal E(\mathbb T;\mathbb C)$ is dense as well. As a corollary, it follows that the Fourier coefficients $\hat f$ of a circle embedding $f\colon \mathbb T\to\mathbb C$ can be arbitrarily close to any element of $\ell^2(\mathbb Z)$.  
 
Note that the real-variable analog of this result is false: continuous injective maps $f\colon [0, 1]\to \mathbb R$ are not dense in $L^p([0, 1])$ for any $p$, as their closure is the set of monotone functions. Also, $\mathcal E(\mathbb T;\mathbb C)$ is not dense in the space of continuous maps $C^0(\mathbb T; \mathbb C)$ with the uniform norm, e.g., a continuous map of $\mathbb T$ onto a ``figure eight'' curve cannot be uniformly approximated by injective maps.  
 
\begin{theorem}\label{approximation-thm}
For $p\in [1, \infty)$, every function $f\in L^p(\mathbb T; \mathbb C)$ can be approximated in the $L^p$ norm by orientation-preserving 
$C^\infty$-smooth embeddings of $\mathbb T$ into $\mathbb C$. 
\end{theorem}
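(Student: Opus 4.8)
The plan is to approximate $f$ in three stages, reducing to the self-intersection bound of Theorem~\ref{self-intersection-thm}. First I would reduce to the case of a trigonometric (Laurent) polynomial: since continuous maps are dense in $L^p(\mathbb T;\mathbb C)$ for $p<\infty$, and trigonometric polynomials are uniformly dense in $C^0(\mathbb T;\mathbb C)$ (Stone--Weierstrass, or Fej\'er means), it suffices to approximate an arbitrary Laurent polynomial $p(z)=\sum_{k=m}^n a_k z^k$ in $L^p$ by an orientation-preserving smooth embedding. So I fix such a $p$ and from here work only with Laurent polynomials, where Theorem~\ref{self-intersection-thm} applies.

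Second, I would make the map an immersion with only finitely many self-intersections and nonvanishing derivative. The idea is to perturb $p$ by adding a small high-degree term, say replacing $p(z)$ by $p_\epsilon(z)=\epsilon z^N + p(z) + \delta z^{-N'}$ for a large degree $N$ (and possibly a negative-degree term) and small coefficients. Adding a genuinely new top-degree and bottom-degree monomial with generic small coefficient guarantees the perturbed polynomial is not of the exceptional form $q(z^j)$ and does not satisfy the balanced-modulus condition $n=-m$, $|a_n|=|a_m|$; hence Theorem~\ref{self-intersection-thm} gives it only \emph{finitely many} self-intersections on $\mathbb T$. A further generic choice ensures $p_\epsilon'$ never vanishes on $\mathbb T$ (so the curve is a smooth immersion) and that the finitely many self-intersections are transverse double points, with distinct parameter pairs. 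Choosing $\epsilon,\delta$ small keeps $\|p_\epsilon - p\|_{L^p}$ small. Thus after this stage I have a smooth immersed closed curve $\gamma=p_\epsilon|_{\mathbb T}$ with finitely many transverse double points, $L^p$-close to $f$.

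Third, I would resolve each double point by a small smooth local surgery to obtain an embedding. At each transverse self-intersection the curve looks locally like two crossing arcs; I replace the crossing by a pair of disjoint smooth arcs ("smoothing the crossing") supported in an arbitrarily small neighborhood of the intersection point, choosing the reconnection that is compatible with the given orientation so the result is again a single orientation-preserving closed curve rather than splitting into two loops. Because there are only finitely many double points and each modification is supported on an arbitrarily small arc of $\mathbb T$, the total $L^p$-change can be made as small as desired; smoothness is preserved by using a smooth cutoff in the surgery. The resulting map is an injective $C^\infty$ immersion of $\mathbb T$, i.e.\ an orientation-preserving smooth embedding, and lies within the prescribed $L^p$-distance of $f$.

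The main obstacle is the second stage: arranging the perturbation so that the self-intersections are simultaneously \emph{finite}, \emph{transverse}, and \emph{simple} (double points with distinct preimage pairs), while staying $L^p$-close and orientation-preserving. Finiteness comes directly from Theorem~\ref{self-intersection-thm} once the exceptional cases are excluded, and excluding them is easy by a generic choice of the added extreme-degree coefficients; the delicate part is the transversality-and-simplicity genericity, which I would handle by a Sard/parametric-transversality argument showing that for almost every small choice of perturbation coefficients the curve is immersed with only transverse double crossings and no triple points or tangential intersections. The orientation-preserving reconnection in stage three is then a matter of choosing the correct local smoothing at each transverse crossing, which is always possible for an oriented curve.
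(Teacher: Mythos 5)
There is a genuine gap in your third stage, and it sits exactly where the real difficulty of the theorem lives. At a transverse self-crossing of a \emph{single} oriented closed curve the two local reconnections behave as follows: the orientation-compatible smoothing (the one used in Seifert's algorithm) always splits the curve into \emph{two} closed components, while the reconnection that keeps the curve connected necessarily reverses the orientation of one of the two arcs into which the crossing's parameter pair divides $\mathbb T$. So the move you invoke --- a local smoothing that is simultaneously orientation-compatible and leaves a single closed curve --- does not exist. The figure-eight is the minimal counterexample: the oriented resolution of its one crossing produces two disjoint ovals, and the connected resolution traverses one lobe backwards, which both ruins the global orientation and moves the parameterization far away in $L^p$ (an arc traversed in reverse is not $L^p$-close to itself traversed forward). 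Indeed, if a purely local surgery of the kind you describe worked, it would prove density of embeddings in the \emph{uniform} norm, which the paper explicitly notes is false precisely because of the figure-eight. The paper's proof escapes this by being non-local in the image: it deletes small parameter neighborhoods of all points involved in self-intersections, obtaining finitely many disjoint simple arcs $\Gamma_j$, shows that $\mathbb C\setminus\bigcup_j\Gamma_j$ is connected (Janiszewski's theorem), and then reconnects the arcs end-to-beginning by smooth paths that may wander far across the plane but are traversed over arbitrarily small parameter intervals; the $L^p$ error is then controlled by the measure of the modified parameter set times $2R$, even though the sup-norm error is large. The final closing-up of the single remaining arc admits two topologically different choices, and that is where the positive orientation is selected.

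Your first two stages essentially match the paper's opening paragraph (Stone--Weierstrass plus a small perturbation to escape the exceptional cases of Theorem~\ref{self-intersection-thm}), except that you demand more than is needed: the paper never requires the self-intersections to be transverse double points, nor the curve to be an immersion --- only that the self-intersection set be finite, so that the map is injective off a finite set. The Sard/parametric-transversality argument you flag as the delicate point can therefore be dispensed with entirely; the actual delicate point is the reconnection, which must be done through the complement of the arcs rather than locally at each crossing.
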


\begin{proof}
By the Stone-Weierstrass theorem, the Laurent polynomials $q(z)=\sum_{k=m}^n a_k z^k$ are dense in $C^0(\mathbb T;\mathbb T)$, hence dense in $L^p$. By a slight perturbation we can ensure that $q$ does not fall into either of the exceptional cases of Theorem~\ref{self-intersection-thm} and therefore $q_{|\mathbb T}$ has a finite number of self-intersections. Consequently, there is a finite subset $F\subset \mathbb T$ such that $q$ is injective on $\mathbb T\setminus F$. 

After removing small disjoint neighborhoods of the elements of $F$ from $\mathbb T$, we obtain a finite set of disjoint arcs $\gamma_j\subset \mathbb T$, $j=1,\dots,N$ whose images under $q$ are disjoint smooth simple arcs $\Gamma_j=q(\gamma_j)$, $j=1,\dots, N$. Recall that a simple arc (a homeomorphic image of a line segment) does not separate the plane~\cite[Theorem V.10.1]{Newman}. By Janiszewski's theorem~\cite[Theorem V.9.1.2]{Newman}, the set $\Omega = \mathbb C\setminus \bigcup_{j=1}^N \Gamma_j$ is connected.  

The arcs $\Gamma_j$ have orientation induced by the positive (counterclockwise) orientation of $\mathbb T$. Since the complement of $\Omega$ consists of smooth arcs, every boundary point of $\Omega$ is accessible from the domain by a smooth curve. In particular, we can join the endpoint of $\Gamma_1$ to the beginning of $\Gamma_2$ by a smooth curve that stays within $\Omega$. This replaces $\Gamma_1$ and $\Gamma_2$ by one simple arc, which we can make smooth as well.

Continue the above process until only one smooth oriented arc is left. We have two topologically different ways to join its ends, creating either a positively oriented simple closed curve, or a negatively oriented one. Up to a global homeomorphism, this choice amounts by completing the oriented segment $[-1, 1]$ either by the upper semicircle with counterclockwise orientation, or by the lower semicircle with clockwise orientation. We choose the closed curve to be positively oriented.

It remains to consider the impact of the above modifications on the $L^p$ norm of the parameterized curve $q\colon \mathbb T\to \mathbb C$. To do this, from the beginning we pick a large $R$ such that $|q|<R$ on $\mathbb T$, and perform the replacements so that the connecting curves remain within the open disk $\{w\colon |w|<R\}$. Then the  $L^p$ distance between the original and modified curves is controlled by the linear measure of the set on which $q$ is modified, and this measure can be made arbitrarily small. 
\end{proof}

The Fourier coefficients of an integrable function $f\colon \mathbb T\to\mathbb C$ are given by
\[
\hat f(n) = \frac1{2\pi}\int_{0}^{2\pi} f(e^{i\theta})e^{-in\theta}\,d\theta.
\]
Theorem~\ref{approximation-thm} and Parseval's theorem imply the following result.

\begin{corollary}\label{l2-dense} For any sequence $c\in \ell^2(\mathbb Z)$ and any $\epsilon>0$ there exists an orientation-preserving circle embedding
$f\colon \mathbb T\to \mathbb C$ such that $\|c-\hat f\|_{2}<\epsilon$.
\end{corollary}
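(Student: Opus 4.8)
The plan is to derive Corollary~\ref{l2-dense} directly from Theorem~\ref{approximation-thm} specialized to $p=2$, together with Parseval's identity. First I would recall that the Fourier transform $f\mapsto \hat f$ is an isometric isomorphism from $L^2(\mathbb T;\mathbb C)$ onto $\ell^2(\mathbb Z)$; concretely, for $f,g\in L^2(\mathbb T;\mathbb C)$ Parseval gives $\|f-g\|_{L^2}^2 = \sum_{k\in\mathbb Z}|\hat f(k)-\hat g(k)|^2 = \|\hat f - \hat g\|_{\ell^2}^2$, with the normalization of $\hat f$ already fixed in the excerpt. The point is simply that approximation in the $L^2$ norm on the function side is \emph{equivalent} to approximation in the $\ell^2$ norm on the Fourier coefficient side.

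Given the target sequence $c\in\ell^2(\mathbb Z)$, the next step is to produce a function whose Fourier coefficients equal $c$. Since the Fourier transform is onto $\ell^2(\mathbb Z)$, there is a unique $f_0\in L^2(\mathbb T;\mathbb C)$ with $\hat f_0 = c$. (Alternatively, and perhaps more self-contained, one can take the $L^2$-convergent series $f_0(e^{i\theta})=\sum_{k}c_k e^{ik\theta}$ and note its coefficients are exactly $c$.) Now I would invoke Theorem~\ref{approximation-thm} with $p=2$: there is an orientation-preserving $C^\infty$-smooth embedding $f\colon\mathbb T\to\mathbb C$ with $\|f_0-f\|_{L^2}<\epsilon$. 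Applying Parseval to the difference $f_0-f$ yields
\[
\|c-\hat f\|_{\ell^2} = \|\hat f_0 - \hat f\|_{\ell^2} = \|f_0 - f\|_{L^2} < \epsilon,
\]
which is exactly the claimed inequality. This completes the argument.

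There is essentially no hard obstacle here: the corollary is a transcription of Theorem~\ref{approximation-thm} through the unitary change of coordinates supplied by Parseval. The only points warranting care are bookkeeping ones—first, making sure the normalization constant $1/(2\pi)$ in the definition of $\hat f$ matches the form of Parseval's theorem being used, so that the transform is a genuine isometry with no stray factor of $2\pi$; and second, confirming that the embedding $f$ furnished by Theorem~\ref{approximation-thm}, being continuous on the compact set $\mathbb T$, is bounded and hence lies in $L^2(\mathbb T;\mathbb C)$, so that $\hat f\in\ell^2(\mathbb Z)$ and the $\ell^2$ distance is well-defined. Neither of these requires real work beyond matching conventions, so I expect the proof to be a short two-line deduction.
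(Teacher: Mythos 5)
Your argument is correct and is exactly the paper's intended deduction: the paper states only that the corollary follows from Theorem~\ref{approximation-thm} and Parseval's theorem, which is precisely the two-line transcription you give (any normalization mismatch would only introduce a harmless constant factor in front of $\epsilon$).
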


Such density no longer holds in some weighted $\ell^2$ norms. For example, $\sum_{n\in \mathbb Z} n|\hat f(n)|^2 > 0$ for every orientation-preserving circle embedding, as this quantity is proportional to the area enclosed by $f(\mathbb T)$.

\bibliographystyle{amsplain} 
\bibliography{references.bib}

\end{document}